\newtheorem{thm}{Theorem}
\newtheorem{cor}[thm]{Corollary}
\newtheorem{lem}[thm]{Lemma}
\theoremstyle{definition}
\newtheorem{rem}[thm]{Remark}
\newtheorem{observation}[thm]{Observation}
\def\la{\lambda}
\def\leq{\leqslant} \def\geq{\geqslant} \def\al{\alpha}
\def\be{\beta}   
 \def\N{\mathbb N} 
\def\Q{\mathbb Q}  \def\eps{\varepsilon}
\def\beq{\begin{equation}}
\def\enq{\end{equation}}
\begin{document}

\baselineskip=17pt

\title[Unimodularity of zeros of self-inversive polynomials]{ Unimodularity of zeros of self-inversive polynomials}

\author[M. N. Lal\'in]{Matilde N. Lal\'in}
\address{D\'epartement de math\'ematiques et de statistique\\ Universit\'e de Montr\'eal\\CP 6128, succ. Centre-ville\\ Montreal, QC, H3C 3J7, Canada}
\email{mlalin@dms.umontreal.ca}
\thanks{M.L. is supported by NSERC Discovery Grant 355412-2008, FQRNT Subvention \'etablissement
de nouveaux chercheurs 144987, and a start-up grant from the Universit\'e de Montr\'eal.}

\author[C. J. Smyth]{Chris J. Smyth}
\address{School of Mathematics and Maxwell Institute for Mathematical Sciences\\University of Edinburgh\\Edinburgh, EH9 3JZ, Scotland}
\email{c.smyth@ed.ac.uk}

\date{}

\begin{abstract}
We generalise a necessary and  sufficient condition given by Cohn for all the zeros of a self-inversive polynomial to be on the unit circle.
Our theorem implies  some sufficient  conditions found by Lakatos, Losonczi and Schinzel. 
We apply our result to the study of a polynomial family closely related to Ramanujan polynomials, recently introduced by Gun, Murty and Rath, and studied
 by Murty, Smyth and Wang and  Lal\'in and Rogers. We prove that all polynomials
 in this family have their zeros on the unit circle, a result conjectured by Lal\'in and Rogers on computational evidence.
\end{abstract}

\subjclass[2010]{Primary 26C10; Secondary 11B68}

\keywords{reciprocal polynomials, self-inversive polynomials, unit circle, Ramanujan polynomials}

\maketitle

\section{Introduction}

A {\it self-inversive} polynomial of degree $d$ is a nonzero complex polynomial $P(z)$ that satisfies 
\beq\label{E-self}
P(z)=\varepsilon z^d\overline{P}(1/z)
\enq
 for some constant $\varepsilon$. So if   $P(z)=\sum_{j=0}^d A_j z^j$ is self-inversive, then $A_j=\varepsilon \overline{A_{d-j}}$ for 
$j=0,\ldots, d$. 
In particular  $A_0=\varepsilon \overline{A_{d}}$ and $\overline{A_d}=\bar\varepsilon A_{0}$, so that $\varepsilon$ is necessarily of modulus $1$. 
 It is easy to check that if a polynomial  has all its zeros on the unit circle then it is self-inversive. In fact, Cohn \cite{C} proved  that a polynomial $P(z)$ has all its zeros on the unit circle if and only if it is self-inversive and its derivative $P'(z)$ has all its zeros in the closed unit disk $|z|\leq 1$. One might think that Cohn's result completely settles the matter. Indeed, Cohn's condition on $P'$ can be verified for a specific self-inversive polynomial, for instance by finding the zeros of $P'$, or checking that the Mahler measure of $P'$ is equal to the modulus of its leading coefficient. However, it may not be possible to use these methods for whole parametrized families of polynomials. We prove here an extension of Cohn's result -- see Theorem \ref{T-1} below --  which turns out to be more flexible than Cohn's Theorem for applications. We apply it to some polynomial families, including the polynomial family $P_k$ given by 
\begin{equation}\label{E-Pk}
P_k(z)=\frac{(2\pi)^{2k-1}}{(2k)!}\sum_{j=0}^k
(-1)^jB_{2j}B_{2k-2j}\binom{2k}{2j}z^{2j}+\zeta(2k-1)\left(z^{2k-1}+(-1)^kz\right).
\end{equation}
Here, as usual, the $B_{2j}$ are Bernoulli numbers. It was known from work of Lal\'\i n and Rogers \cite{LR} that the polynomials in this family had all their zeros on the unit circle for $k\leq 1000$. They  conjectured that this was true for all $k$. However, this conjecture had resisted previous attempts to prove it.  We do this in Theorem \ref{T-2} below, as an application of our main theorem (Theorem \ref{T-1}).

 There have also been a number of results in recent years that provide sufficient, easier-to-verify conditions for a self-inversive
polynomial to have its zeros on the unit circle. Lakatos \cite{L} proved that a reciprocal polynomial $\sum_{j=0}^{d}A_j z^j$ with real coefficients that satisfies 
\begin{equation}\label{Lakatos condition 1}
|A_d|\geq\sum_{j=0}^{d}\left| A_{j}-A_{d}\right|,
\end{equation}
has its zeros on the unit circle. Moreover, if the inequality is strict, the zeros are simple. 
Schinzel \cite{Sc2} improved Lakatos' result. His result -- see Corollary \ref{C-Schinzel} and Remark \ref{R-Lakatos} below -- follows from Theorem \ref{simple result}.

On the other hand, Lakatos and Losonczi \cite{LL2} proved that a reciprocal polynomial satisfying 
\begin{equation}\label{Lakatos condition 3}
|(1+\alpha)A_d|\geq\sum_{j=1}^{d-1}\left| A_{j}-(1-\alpha)A_{d}\right|
\end{equation}
for some $0\leq \alpha \leq 1$, has all its zeros on the unit circle. We have not been able to deduce their result using our approach, except in the case $\alpha=0$, see \eqref{Lakatos condition 1} and Remark \ref{R-Lakatos} below, 
and in the case $\alpha=1$,  which is  Corollary \ref{C-Lak}.
Their result was combined with Schinzel's into a more general statement,   proved in \cite{LL3}.

We will prove another result (Theorem \ref{simple result}) of the same general type that extends Schinzel's criterion in a different direction.

For other results concerning self-inversive and reciprocal polynomials, see
the books by Marden \cite[pp. 201--206]{Ma}, Rahman and  Schmeisser \cite{RS} and Schinzel \cite{Sc1}, as well as the papers by Ancochea \cite{A}, Bonsall and Marden \cite{BM1,BM2} and O'Hara and Rodriguez \cite{OR}.

In Section \ref{S-2} we state and prove our main result (Theorem \ref{T-1}), and deduce various consequences of it, including Theorem \ref{simple result} and Lemma \ref{L-2}. In Section \ref{S-3} we apply Lemma \ref{L-2} to prove that the polynomial family $P_k$ given by (\ref{E-Pk}) have all their zeros on the unit circle. Then in Section \ref{S-4} we give some details of how the same result, due originally to Lal\'\i n and Rogers \cite{LR}, can be proved in a similar way for two other polynomial families $Q_k$ and $W_k$.

\section{Results}\label{S-2}

Our main theorem is the following.

\begin{thm}\label{T-1} Let $h(z)$ be a nonzero complex polynomial of degree $n$ having
all its zeros in the closed unit disk $|z|\leq1$. Then for $d>n$ and any
$\lambda$ on the unit circle, the self-inversive polynomial 
\beq\label{E-Ph} 
P^{\{\lambda\}}(z)= z^{d-n}h(z)+\lambda h^*(z) 
\enq 
has all its zeros on the unit circle.

Conversely, given a self-inversive polynomial $P(z)$ having all its zeros on the unit circle, there is a polynomial $h$ having all its zeros in $|z|\leq1$ such that $P$ has a representation (\ref{E-Ph}). In particular, we can take $h(z)=\frac{1}{d}P'(z)$.
\end{thm}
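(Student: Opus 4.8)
The plan is to handle the two directions separately, using the quotient $\phi(z)=z^{d-n}h(z)/h^*(z)$ as the central object for the forward direction and a differentiation identity for the converse. First I would record the basic facts about the starred polynomial: writing $h(z)=\sum_{j=0}^n a_jz^j$, one has $h^*(z)=z^n\overline h(1/z)$ with $\overline h$ denoting coefficient-conjugation, so that $|h^*(z)|=|h(z)|$ for $|z|=1$, and if $h(z)=a\prod_j(z-\alpha_j)$ then $h^*(z)=\overline a\prod_j(1-\overline{\alpha_j}z)$. A short computation (comparing coefficients, or multiplying through in the defining relation) confirms that $P^{\{\lambda\}}$ is genuinely self-inversive with $\varepsilon=\lambda$, so its zeros are symmetric under $z\mapsto 1/\overline z$; note also that $\deg P^{\{\lambda\}}=d$ because $h^*$ has degree $\le n<d$.

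For the forward direction I would prove that $P^{\{\lambda\}}$ has no zero in the open disk $|z|<1$, via $\phi$. The zeros of $h^*$ are the points $1/\overline{\alpha_j}$ with $|\alpha_j|\le 1$, which all lie in $|z|\ge 1$; the only ones on the closed disk sit on the circle, where $h$ vanishes to the same order, so those singularities of $\phi$ are removable and $\phi$ is holomorphic on $|z|\le 1$ with $|\phi|=1$ on the boundary. Since $d-n\ge 1$, $\phi$ is nonconstant (indeed, up to a unimodular constant it is $z^{d-n}$ times a finite Blaschke product), so the maximum modulus principle gives $|\phi(z)|<1$ for $|z|<1$. Equivalently $|z^{d-n}h(z)|<|h^*(z)|$ there, so the equation $z^{d-n}h(z)=-\lambda h^*(z)$ is impossible and there are no zeros inside. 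By the self-inversive symmetry, a zero with $|z|>1$ would force a partner with $|z|<1$, so there are none outside either; as $P^{\{\lambda\}}$ has exactly $d$ zeros, all of them lie on $|z|=1$.

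For the converse I would take $n=d-1$, $h(z)=\tfrac1d P'(z)$, and $\lambda=\varepsilon$. Differentiating the self-inversive relation $P(z)=\varepsilon z^d\overline P(1/z)$ and multiplying by $z$ gives $zP'(z)=dP(z)-\varepsilon z^{d-1}\overline P'(1/z)=dP(z)-\varepsilon (P')^*(z)$, that is $dP(z)=zP'(z)+\varepsilon (P')^*(z)$, which is exactly $P(z)=z^{d-n}h(z)+\lambda h^*(z)$. Finally, by the Gauss--Lucas theorem the zeros of $P'$ lie in the convex hull of the zeros of $P$, which is contained in $|z|\le 1$, so $h$ has all its zeros in the closed disk, as required.

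The main obstacle is that on $|z|=1$ the two summands of $P^{\{\lambda\}}$ have equal modulus, so Rouch\'e's theorem does not apply directly; the remedy is to pass to the quotient $\phi$ and combine the maximum modulus principle with the self-inversive pairing of zeros. The one point needing care is the behaviour of $\phi$ at zeros of $h$ on the unit circle, where $h^*$ also vanishes: one must verify these are removable so that $\phi$ is genuinely holomorphic up to the boundary. Alternatively, one can factor out the self-inversive polynomial formed by the circle-zeros of $h$ and reduce to the case where $h$ has all its zeros strictly inside the disk, where $\phi$ is a Blaschke product of degree $d$ and the count of solutions of $\phi(z)=-\lambda$ on $|z|=1$ is immediate.
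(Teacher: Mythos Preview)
Your argument is correct. The converse is essentially identical to the paper's: both differentiate the self-inversive relation to obtain $dP(z)=zP'(z)+\varepsilon(P')^*(z)$ and then invoke Gauss--Lucas. For the forward direction, however, you take a genuinely different route. The paper perturbs the parameter $\lambda$ off the unit circle: for $|\lambda|>1$ (respectively $|\lambda|<1$) Rouch\'e's theorem forces all zeros of $P^{\{\lambda\}}$ into $|z|>1$ (respectively $|z|<1$), and continuity in $\lambda$ then pins the zeros to the circle when $|\lambda|=1$; a second continuity argument in $h$ handles the case where $h$ has zeros on the circle. You instead work directly with the quotient $\phi(z)=z^{d-n}h(z)/h^*(z)$, observe that it is a nonconstant holomorphic function on the closed disk with $|\phi|=1$ on the boundary, and use the maximum modulus principle to get $|\phi|<1$ in the open disk, so no interior zeros; the self-inversive pairing then rules out exterior zeros. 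Your approach avoids both limiting steps and links the problem to finite Blaschke products; the paper's approach has the side benefit of describing the zero locus for all $\lambda\in\mathbb{C}$, not just unimodular ones.
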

Here $h^*(z)=z^n\overline{h}(1/z)$.
\begin{proof}

Assume first that the polynomial $h$, of degree $n$, has all its zeros in $|z|<1$.
 Then  $z^{d-n}h(z)$ has all of its zeros in the open unit disk while $h^*(z)$ has all of its zeros
with absolute value greater than 1. 
Now take $z$ such that $|z|=1$. We have 
\[ |h^*(z)|=|\overline{h}(1/z)|=|\overline{h(z)}|=|z^{d-n}h(z)|.
\] 
Assume for the time being that $\lambda$ has absolute value greater (respectively less) than $1$. Then
$| z^{d-n}h(z)|$ is less (respectively greater) than $|\lambda h^*(z)|$.
Hence, by Rouch\'e's Theorem, $P^{\{\lambda\}}(z)$ has all  of its zeros
in $|z|>1$  (respectively all of its zeros in $|z|<1$). As the zeros of $P^{\{\lambda\}}$ are
continuous functions of $\lambda$, we see that when  $|\lambda|=1$ then
$P^{\{\lambda\}}$ must have all its zeros on the unit circle.

The result under the weaker assumption that $h$ has all its zeros in the closed unit disc $|z|\leq1$ then follows by continuity.

Conversely, given $P$ self-inversive of degree $d$ with all its zeros on the unit circle, we note that  differentiating (\ref{E-self}) gives
\beq \label{E-self'}
P(z)=\frac{z}{d}P'(z)+\varepsilon \frac{z^{d-1}}{d}\overline{P'}(1/z),
\enq
which is of the form (\ref{E-Ph}) with $h(z)=\frac{1}{d}P'(z)$, $\la=\eps$ and $n=d-1$. Further,  the zeros of $P'$ certainly all lie in $|z|\leq1$. This is because  the zeros of $P'$ lie within the convex hull of the zeros of $P$, a result due originally to Gauss and Lucas --- see  \cite[pp. 23--24]{Ma} and \cite[pp. 72--73, pp. 92--93]{RS} for relevant references.
For completeness, and because of its elegance, we now reproduce a  proof,  by Ces\`aro, of this latter result, taken from \cite{RS}. 

Let $P(z)$ have zeros $\al_1,\ldots,\al_d$, and suppose that $P'(\be)=0$. If $\be$ equals some $\al_j$ then $\be$ is clearly in the convex hull of all the $\al_i$. So we can assume that $P(\be)\ne 0$, and then  on logarithmic differentation we have $P'(z)/P(z)=\sum_i\frac{1}{z-\al_i}$, and hence, putting $z=\be$ and taking complex conjugates, that $\sum_i\frac{1}{\overline{\be}-\overline{\al_i}}=0$.   Then 
 $\sum_i\frac{\be-\al_i}{| \be-\al_i|^2}=0$, giving $\be=\sum_i \la_i\al_i$, where $$\la_i=\frac{|\be-\al_i|^{-2}}{\sum_j |\be-\al_j|^{-2}}.$$
 Thus the $\la_i$ are all positive and sum to $1$.
 \end{proof}

It is not the case in general  that if (\ref{E-Ph}) holds for a particular $h$, and $P$ has all its zeros on the unit circle then $h$ must have all its zeros in $|z|\leq1$. For example, it is  known (see e.g., \cite[p. 9]{Sm}) that the polynomial $P(z)=z^k(z^3-z-1)+(z^3+z^2-1)$ has all its zeros on the unit circle for $k=0,1,\ldots,7$, while $h(z)=z^3-z-1$ has a zero  $1.3247 \ldots >1$.
However, in that direction we can say the following.

\begin{observation} For any $d>n$, let
 \[P_d^{\{\lambda\}}(z)= z^{d-n}h(z)+\lambda h^*(z).\]
If there is a $K>0$ such that for every $d>K$, $P_d^{\{\lambda\}}(z)$ has all its zeros on the unit circle, then $h(z)$ has all its zeros 
in the unit circle $|z|\leq 1$. 
\end{observation}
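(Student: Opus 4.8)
The plan is to argue by contradiction. Suppose $h$ has a zero $\be$ with $|\be|>1$. The key observation is that the reflected polynomial $h^*(z)=z^n\overline{h}(1/z)$ then has a zero \emph{strictly inside} the open unit disk: writing $h(z)=A_n\prod_i(z-\al_i)$ we get $h^*(z)=\overline{A_n}\prod_i(1-\overline{\al_i}z)$, so $h^*$ vanishes at $z_0=1/\overline{\be}$, and $|z_0|=1/|\be|<1$ with $z_0\ne0$. I would exploit this interior zero of $h^*$ by showing that for large $d$ the polynomials $P_d^{\{\la\}}$ must inherit a zero close to $z_0$, hence inside the unit disk.

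The main step is to identify the limiting behaviour of $P_d^{\{\la\}}$ on compact subsets of the open disk. Fix $r$ with $|z_0|<r<1$. On $\{|z|\le r\}$ one has $z^{d-n}\to0$ uniformly as $d\to\infty$, while $h$ is bounded there, so $z^{d-n}h(z)\to0$ uniformly and hence
\[
P_d^{\{\la\}}(z)=z^{d-n}h(z)+\la h^*(z)\longrightarrow\la h^*(z)
\]
uniformly on $\{|z|\le r\}$. Since $\la$ lies on the unit circle (in particular $\la\ne0$), the limit $\la h^*$ is a nonzero polynomial with an isolated zero at the interior point $z_0$.

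From here I would finish with Rouch\'e's theorem, to match the style of the proof of Theorem~\ref{T-1}. Choose $\eps>0$ small enough that $z_0$ is the only zero of $h^*$ in the closed disk $\{|z-z_0|\le\eps\}$ and that this disk lies inside $\{|z|<1\}$ (possible since $|z_0|<1$). Set $\de=\min_{|z-z_0|=\eps}|\la h^*(z)|>0$. On the circle $|z-z_0|=\eps$ we have $|z|\le|z_0|+\eps<1$, so $|z^{d-n}h(z)|\le(|z_0|+\eps)^{d-n}\max|h|\to0$, and thus $\max_{|z-z_0|=\eps}|z^{d-n}h(z)|<\de$ for all large $d$. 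By Rouch\'e's theorem $P_d^{\{\la\}}$ then has as many zeros inside $|z-z_0|=\eps$ as $\la h^*$, hence at least one; that zero lies strictly inside the unit disk, contradicting the hypothesis that every $P_d^{\{\la\}}$ with $d>K$ has all its zeros on $|z|=1$. Therefore $h$ has no zero outside the closed unit disk. (Alternatively, Hurwitz's theorem applied to the uniform limit above yields the same interior zero.)

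The argument is essentially routine once the limiting polynomial $\la h^*$ is identified; the only point requiring genuine care is ensuring that the relevant zero $z_0$ of $h^*$ is truly interior and nonzero, so that an entire neighbourhood of it avoids the unit circle — and this is precisely what the assumption $|\be|>1$ guarantees. No information about $K$ beyond taking $d$ large enough for the Rouch\'e estimate is needed.
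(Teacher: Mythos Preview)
Your argument is correct, and follows the same overall strategy as the paper: assume $h$ has a zero off the closed disk and use Rouch\'e's theorem to locate, for all large $d$, a zero of $P_d^{\{\la\}}$ off the unit circle. The execution, however, is dual to the paper's. You pass to the reflected zero $z_0=1/\overline{\be}$ of $h^*$ \emph{inside} the open disk and show that there $\la h^*$ dominates $z^{d-n}h$, forcing $P_d^{\{\la\}}$ to have an interior zero. The paper instead works directly with the zero $z_0$ of $h$ \emph{outside} the disk, where $z^{d-n}h$ dominates $\la h^*$ (since $|z|\ge|z_0|-\delta>1$), forcing an exterior zero. Because $P_d^{\{\la\}}$ is self-inversive, an exterior zero and an interior zero are equivalent obstructions, so the two versions are mirror images of one another. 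Your route has the mild advantage of making the uniform limit $P_d^{\{\la\}}\to\la h^*$ on compacta explicit (so Hurwitz is a natural alternative), while the paper's route avoids the preliminary reflection step.
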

\begin{proof}
Assume that $h(z)$ has a zero $z_0$ with $|z_0|>1$. Take $\delta<|z_0|-1$, so that $|z_0|-\delta>1$. Then for $z$ on the circle $|z-z_0|=\delta$ and $d$
 sufficiently large we have
$$
|z^{d-n}h(z)|\geq (|z_0|-\delta)^{d-n}|h(z)|>|\la h^*(z)|.
$$ 
Hence, by Rouch\'e's Theorem, $P^{(\la)}(z)$ has the same number of zeros in the disc $|z-z_0|<\delta$ as $z^{d-n}h(z)$ has, namely at least one. This disc is completely
 outside the unit circle.
\end{proof}

As a simple consequence of Theorem \ref{T-1}, we obtain the following known result.

\begin{cor}[{{Lakatos and Losonczi \cite{LL1}}}]           \label{C-Lak}
 A self-inversive polynomial $P(z)=\sum_{j=0}^{d}A_j z^j$ satisfying 
\begin{equation}\label{E-Lak}
|A_d|\geq\tfrac12\sum_{j=1}^{d-1}|A_{j}|
\end{equation}
has all its zeros on the unit circle.
\end{cor}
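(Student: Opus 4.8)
The plan is to realise $P$ directly in the form (\ref{E-Ph}) of Theorem \ref{T-1} and then read off the zero condition, rather than working with $P'$. Since $P(z)=\sum_{j=0}^{d}A_j z^j$ is self-inversive of degree $d$, I would set $n=d-1$, $\lambda=\varepsilon$, and take as the candidate factor
\[
h(z)=A_d z^{d-1}+\tfrac12\sum_{k=1}^{d-1}A_k z^{k-1}.
\]
The idea behind this choice is to split each middle coefficient $A_k$ ($1\le k\le d-1$) evenly between the two summands $z^{d-n}h(z)=zh(z)$ and $\lambda h^*(z)$, while the extreme coefficients $A_d$ and $A_0$ are supplied respectively by the leading term of $zh(z)$ and the constant term of $\varepsilon h^*(z)$.

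First I would verify that this $h$ actually reproduces $P$. Computing $h^*(z)=z^{d-1}\overline h(1/z)=\overline{A_d}+\tfrac12\sum_{k=1}^{d-1}\overline{A_k}\,z^{d-k}$ and multiplying by $\varepsilon$, the self-inversiveness relations $A_0=\varepsilon\overline{A_d}$ and $A_{d-k}=\varepsilon\overline{A_k}$ turn $\varepsilon h^*(z)$ into $A_0+\tfrac12\sum_{k=1}^{d-1}A_k z^k$ after reindexing. Adding $zh(z)=A_d z^d+\tfrac12\sum_{k=1}^{d-1}A_k z^k$ then recovers $\sum_{j=0}^{d}A_j z^j=P(z)$, so that $P=zh+\varepsilon h^*$ is genuinely of the form (\ref{E-Ph}) with $\lambda=\varepsilon$ on the unit circle.

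Next I would check that $h$ has all its zeros in $|z|\le 1$, which is the one hypothesis Theorem \ref{T-1} needs. The leading coefficient of $h$ is $A_d$ and the remaining coefficients are $\tfrac12 A_1,\dots,\tfrac12 A_{d-1}$, so the elementary coefficient-dominance estimate applies: for $|z|>1$ one has $|A_d z^{d-1}|=|A_d|\,|z|^{d-1}>|A_d|\ge\tfrac12\sum_{k=1}^{d-1}|A_k|\ge\bigl|\tfrac12\sum_{k=1}^{d-1}A_k z^{k-1}\bigr|\,|z|^{-(d-2)}\cdot|z|^{d-2}$, so $h$ can have no zero outside the closed unit disk precisely when $|A_d|\ge\tfrac12\sum_{k=1}^{d-1}|A_k|$. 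This is exactly the hypothesis (\ref{E-Lak}). Since $A_d\neq0$, $h$ is nonzero of degree $n=d-1$ with all its zeros in $|z|\le 1$, and Theorem \ref{T-1} immediately yields that $P$ has all its zeros on the unit circle.

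The only real content is guessing the correct $h$: once the middle coefficients are split in half, self-inversiveness forces everything else, and the hypothesis (\ref{E-Lak}) is tailored exactly so that these half-coefficients are dominated by the leading coefficient $A_d$. I expect the main point to watch to be the bookkeeping in the $h^*$ computation, in particular the correct application of $A_{d-k}=\varepsilon\overline{A_k}$ and the reindexing $k\mapsto d-k$; the zero-location step is then the standard Cauchy-type coefficient bound and presents no difficulty.
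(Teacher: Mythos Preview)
Your argument is correct and is a genuine alternative to the paper's proof. Both apply Theorem~\ref{T-1} directly, but with different choices of $h$. The paper takes $h$ of degree $\lfloor d/2\rfloor$, using the top half of the coefficients $A_d,\dots,A_{\lceil d/2\rceil+1}$ unchanged and halving only the middle coefficient $A_{d/2}$ when $d$ is even; self-inversiveness then makes the non-leading coefficients of $h$ sum (in modulus) to exactly $\tfrac12\sum_{j=1}^{d-1}|A_j|$. Your choice instead takes $h$ of degree $d-1$ and halves \emph{all} the middle coefficients, which leads to the same dominance inequality without any parity split. The paper's $h$ has the aesthetic advantage of minimal degree, while yours is uniform in $d$ and the verification $P=zh+\varepsilon h^*$ is a single line. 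One cosmetic point: the displayed inequality chain in your zero-location step is garbled (the factor $|z|^{-(d-2)}\cdot|z|^{d-2}$ is vacuous and the step $\tfrac12\sum|A_k|\ge\bigl|\tfrac12\sum A_kz^{k-1}\bigr|$ is false for $|z|>1$); replace it with the standard estimate $|h(z)|\ge |z|^{d-2}\bigl(|A_d|\,|z|-\tfrac12\sum_{k=1}^{d-1}|A_k|\bigr)>0$ for $|z|>1$, which is what you clearly intend.
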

\begin{proof}
We take
\[h(z)=\left\{\begin{array}{ll} A_dz^\frac{d}{2}+A_{d-1}z^{\frac{d}{2}-1} +\ldots +A_{\frac{d}{2}+1}z+\frac12 A_{\frac{d}{2}}
,& d \mbox{ even},\\\\
A_dz^\frac{d-1}{2}+A_{d-1}z^{\frac{d-3}{2}} +\ldots +A_{\frac{d+1}{2}},& d \mbox{ odd}.
\end{array}\right.\] 
Then, since by (\ref{E-Lak}) the leading coefficient of $h(z)$ is at least as big as the sum
of the moduli of the other coefficients, $h(z)$ has no zeros in $|z|>1$. 
Now $P(z)/A_d=z^{\lfloor\frac{d+1}{2}\rfloor}h(z)+\varepsilon h^*(z)$, where $\varepsilon=A_0/\overline{A_d}$. 
Therefore,
 by Theorem \ref{T-1}, $P(z)$ has all its zeros on the unit circle.
\end{proof}

We can also deduce the next result from Theorem \ref{T-1}.

\begin{thm}\label{simple result} A self-inversive polynomial $P(z)=\sum_{j=0}^{d}A_j z^j$ satisfying 
\begin{equation}\label{Smyth's condition}
|A_d|\geq\tfrac12\inf_{\substack{\mu\in\mathbb{C}\\|\mu|=1}}\sum_{j=0}^{d-1}\left|A_{j}-\mu A_{j+1}\right|
\end{equation}
 has all its zeros on the unit circle.
 \end{thm}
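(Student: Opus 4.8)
The plan is to reduce Theorem \ref{simple result} to Corollary \ref{C-Lak} by multiplying $P$ by a well-chosen linear factor. The key observation is that the quantities $A_j-\mu A_{j+1}$ appearing in \eqref{Smyth's condition} are, up to a shift of index, exactly the coefficients of the product $(z-\mu)P(z)$. Indeed, writing $G(z)=(z-\mu)P(z)=\sum_{j=0}^{d+1}c_jz^j$, one finds $c_j=A_{j-1}-\mu A_j$ (with the convention $A_{-1}=A_{d+1}=0$); in particular $c_0=-\mu A_0$, $c_{d+1}=A_d$, and $\sum_{j=1}^{d}|c_j|=\sum_{i=0}^{d-1}|A_i-\mu A_{i+1}|$. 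This identity is what explains both the factor $\tfrac12$ and the precise shape of the hypothesis \eqref{Smyth's condition}.

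First I would choose $\mu$ on the unit circle attaining the infimum in \eqref{Smyth's condition}; such a $\mu$ exists because the circle is compact and the sum is a continuous function of $\mu$. Since $|\mu|=1$, the linear polynomial $z-\mu$ is itself self-inversive, and a product of self-inversive polynomials is again self-inversive; hence $G$ is self-inversive of degree $d+1$ (its leading and constant coefficients $A_d$ and $-\mu A_0$ are nonzero). Next I would check that $G$ satisfies the hypothesis of Corollary \ref{C-Lak}: its leading coefficient is $c_{d+1}=A_d$, and by the coefficient computation above together with \eqref{Smyth's condition},
\[
|c_{d+1}|=|A_d|\geq\tfrac12\sum_{i=0}^{d-1}|A_i-\mu A_{i+1}|=\tfrac12\sum_{j=1}^{d}|c_j|,
\]
which is exactly condition \eqref{E-Lak} for the degree-$(d+1)$ polynomial $G$. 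Corollary \ref{C-Lak} then yields that $G$ has all its zeros on the unit circle.

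Finally I would conclude by noting that the zeros of $P$ are among the zeros of $G=(z-\mu)P$, so they too all lie on the unit circle; the single extra zero introduced by the factor $z-\mu$ is $\mu$, which already sits on the unit circle, so no spurious off-circle zero appears.

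I do not expect a serious obstacle. The entire content is the elementary algebraic identity relating the coefficients of $(z-\mu)P$ to the differences $A_j-\mu A_{j+1}$, after which everything follows from Corollary \ref{C-Lak}. The only point requiring a little care is the justification that the infimum over the unit circle is genuinely attained, so that a single $\mu$ can be used throughout the argument; this is immediate from continuity and compactness. It is worth remarking that this route derives Theorem \ref{simple result} from Corollary \ref{C-Lak} (which itself rests on Theorem \ref{T-1}), rather than applying Theorem \ref{T-1} directly.
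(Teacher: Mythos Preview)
Your argument is correct and is essentially the paper's own proof: both multiply $P$ by the unit-circle factor $z-\mu$ and then verify that the resulting degree-$(d+1)$ self-inversive polynomial satisfies the Lakatos--Losonczi condition \eqref{E-Lak}. The only cosmetic differences are that the paper first normalises to $\mu=1$ via the substitution $z\mapsto\mu z$ and then unpacks the proof of Corollary \ref{C-Lak} inline (writing down the explicit $h$ and invoking Theorem \ref{T-1}), whereas you work with general $\mu$ directly and cite Corollary \ref{C-Lak} as a black box.
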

\begin{proof}
We first consider the case $\mu=1$, and look at $P(z)(z-1)$. We take \[h(z)= \left\{\begin{array}{ll} A_dz^\frac{d}{2}+ \left(A_{d-1}-A_d\right)z^{\frac{d}{2}-1}+\ldots &\\ 
+\left(A_\frac{d}{2}-A_{\frac{d}{2}+1}\right),& d \mbox{ even},\\\\
A_dz^\frac{d+1}{2}+ \left(A_{d-1}-A_d\right)z^{\frac{d-1}{2}}+\ldots&\\ 
+\left(A_\frac{d+1}{2}-A_{\frac{d+3}{2}}\right)z+\frac{1}{2}\left(A_{\frac{d-1}{2}}-A_{\frac{d+1}{2}}\right),& d \mbox{ odd}.
\end{array}\right.\] 
Again, the absolute value of the leading coefficient of $h(z)$ is at least as big as the sum
of the moduli of the other coefficients. This implies that $h(z)$ has no zeros in $|z|>1$ and we can apply the Theorem \ref{T-1} with $\lambda=-\varepsilon $ to conclude that $P(z)(z-1)$, and therefore $P(z)$,
 has all its zeros on the unit circle.
 
 To obtain the result in general, let $Q(z):=P(\mu z)$ and apply what has been proved to $Q(z)$, using the fact that $\left|\mu^jA_{j}-\mu^{j+1} A_{j+1}\right|=\left|A_{j}-\mu A_{j+1}\right|$.  
\end{proof}

\begin{cor}[{{Schinzel \cite{Sc2}}}]           \label{C-Schinzel}
A self-inversive polynomial $P(z)=\sum_{j=0}^{d}A_j z^j$ satisfying 
\begin{equation}\label{Lakatos condition 2}
|A_d|\geq\inf_{\substack{c,\mu\in\mathbb{C}\\|\mu|=1}}\sum_{j=0}^{d}\left|c
A_{j}-\mu^{d-j} A_{d}\right|,
\end{equation}
must have all of its zeros on the unit circle.
\end{cor}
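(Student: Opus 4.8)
The goal is to deduce Corollary \ref{C-Schinzel} (Schinzel's criterion) from Theorem \ref{simple result}. The plan is to show that the hypothesis \eqref{Lakatos condition 2} of the corollary forces the hypothesis \eqref{Smyth's condition} of Theorem \ref{simple result} to hold, at least after applying Theorem \ref{simple result} to a suitable auxiliary polynomial. The natural auxiliary object suggested by the extra parameter $c$ in \eqref{Lakatos condition 2} is the scaled polynomial $cP(z)$: since $P$ and $cP$ have the same zeros (for $c\ne0$), it suffices to verify the Theorem \ref{simple result} condition for $cP$. Writing the coefficients of $cP$ as $cA_j$, the quantity $|A_d|$ in \eqref{Smyth's condition} becomes $|cA_d|$, and the sum becomes $\tfrac12\inf_{|\mu|=1}\sum_{j=0}^{d-1}|cA_j-\mu cA_{j+1}|$.

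The first step is therefore to reduce the telescoping-type sum $\sum_{j=0}^{d-1}|cA_j-\mu cA_{j+1}|$ appearing in \eqref{Smyth's condition} (for the polynomial $cP$) to the sum $\sum_{j=0}^{d}|cA_j-\mu^{d-j}A_d|$ appearing in \eqref{Lakatos condition 2}. The key observation I would exploit is the triangle inequality applied to the telescoping identity
\[
cA_j-\mu\, cA_{j+1}=\bigl(cA_j-\mu^{d-j}A_d\bigr)-\mu\bigl(cA_{j+1}-\mu^{d-j-1}A_d\bigr),
\]
which holds because $\mu\cdot\mu^{d-j-1}=\mu^{d-j}$. Taking absolute values and using $|\mu|=1$ gives
\[
|cA_j-\mu\, cA_{j+1}|\leq |cA_j-\mu^{d-j}A_d|+|cA_{j+1}-\mu^{d-j-1}A_d|.
\]
Summing over $j=0,\ldots,d-1$ makes each interior term $|cA_j-\mu^{d-j}A_d|$ appear at most twice, so the total is bounded by $2\sum_{j=0}^{d}|cA_j-\mu^{d-j}A_d|$ (the endpoint terms $j=0$ and $j=d$ each appear once, so the factor $2$ is safe). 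Hence
\[
\tfrac12\sum_{j=0}^{d-1}|cA_j-\mu\, cA_{j+1}|\leq \sum_{j=0}^{d}|cA_j-\mu^{d-j}A_d|.
\]

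Taking the infimum over $c$ and $\mu$ on the right and invoking the hypothesis \eqref{Lakatos condition 2}, I would conclude that there exist $c,\mu$ (with $|\mu|=1$) for which $|cA_d|\geq\tfrac12\sum_{j=0}^{d-1}|cA_j-\mu\,cA_{j+1}|$, which is exactly the hypothesis \eqref{Smyth's condition} for the self-inversive polynomial $cP(z)$. Theorem \ref{simple result} then yields that $cP$, and hence $P$, has all its zeros on the unit circle. The main point to be careful about is the bookkeeping of which parameters are fixed versus varied: in \eqref{Lakatos condition 2} one takes an infimum over both $c$ and $\mu$, and I must ensure the \emph{same} pair $(c,\mu)$ realizing (or nearly realizing) that infimum can be fed into Theorem \ref{simple result}; a brief limiting/approximation argument handles the case where the infimum is not attained. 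I expect the only real subtlety to be confirming that the factor of $\tfrac12$ matches exactly, and checking that $cP$ is still self-inversive—which holds since scaling by a constant $c$ preserves the self-inversive structure up to replacing $\varepsilon$ by $(c/\overline c)\varepsilon$, still of modulus one.
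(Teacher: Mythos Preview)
Your approach---reducing Schinzel's sum to the telescoping sum of Theorem \ref{simple result} via the triangle inequality---is the same route the paper takes, but there is a genuine gap where you say ``the factor $2$ is safe'' and discard the endpoint terms. You prove
\[
\tfrac12\sum_{j=0}^{d-1}|cA_j-\mu\, cA_{j+1}|\leq \sum_{j=0}^{d}|cA_j-\mu^{d-j}A_d|,
\]
and hypothesis \eqref{Lakatos condition 2} bounds the right-hand side by $|A_d|$. But to apply Theorem \ref{simple result} to $cP$ you need the left-hand side bounded by $|cA_d|$, not $|A_d|$. Since both sides of \eqref{Smyth's condition} are homogeneous of degree one in the coefficients, the condition for $cP$ is identical to that for $P$; your ``apply to $cP$'' device therefore buys nothing, and after dividing through by $|c|$ your bound reads $\tfrac12\sum_{j}|A_j-\mu A_{j+1}|\leq |A_d|/|c|$. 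This falls short of $|A_d|$ precisely when the (near-)optimal $c$ in \eqref{Lakatos condition 2} satisfies $|c|<1$, a case the hypothesis does not exclude.

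The repair, which is what the paper does, is to keep the endpoint contributions. Your own double-counting actually yields the sharper bound
\[
\sum_{j=0}^{d-1}|cA_j-\mu\, cA_{j+1}|\leq 2\sum_{j=0}^{d}|cA_j-\mu^{d-j}A_d|-|cA_0-\mu^{d}A_d|-|cA_d-A_d|.
\]
Both subtracted terms are at least $(1-|c|)\,|A_d|$ when $|c|\le 1$: for the second this is $|c-1|\geq 1-|c|$, and for the first use self-inversiveness ($|A_0|=|A_d|$) together with the reverse triangle inequality. Retaining them converts the bound $2|A_d|$ into $2|c|\,|A_d|$, which is exactly what is required. The paper records this (parametrising by $1/c$ rather than $c$) as the final step $2|1/c|\,|A_d|-2|1-1/c|\,|A_d|\leq 2|A_d|$, an instance of the triangle inequality $|w|-|1-w|\leq 1$.
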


\begin{proof}
Consider the self-inversive polynomial $P(z)=\sum_{j=0}^dA_jz^j$
 and assume that \eqref{Lakatos condition 2} is satisfied. Then we claim that (\ref{Smyth's condition}) holds.
We first  check this for $\mu=1$.  Indeed, by applying twice the triangle inequality,
\begin{align*}
\sum_{j=0}^{d-1} |A_j-A_{j+1}|&\leq  \sum_{j=0}^{d-1} |A_j-1/cA_d|+  \sum_{j=0}^{d-1} |1/cA_d-A_{j+1}|\\
&=2 \sum_{j=0}^{d} |A_j-1/cA_{d}|-2|1-1/c||A_d| \\
&\leq 2|1/cA_d| -2|1-1/c||A_d|\\
&\leq 2|A_d|.
\end{align*}

Then, again applying the result to $P(\mu z)$ for general $\mu$ on the unit circle gives the full result.
\end{proof} 

\begin{rem} \label{R-Lakatos} The condition \eqref{Lakatos condition 1} of Lakatos is the special case $c=\mu=1$ of \eqref{Lakatos condition 2}.
\end{rem}

We next show that the result of Theorem \ref{T-1} still holds if $P^{\{\lambda\}}(z)$ is perturbed by a small self-inversive `error' polynomial.

\begin{lem}\label{L-2} Let $h$ and $\lambda$ be as in Theorem \ref{T-1}, with
$|h(z)|\geq c>0$ for $|z|=1$. Let $e(z)$ be a polynomial of degree $m$ such that $|e(z)|\leq c$ for
$|z|=1$. Then  for $k>\max\{m,n\}$,   the self-inversive  polynomial \[
z^{2k-n}h(z)+z^{k}e(z)+\lambda(h^*(z)+z^{k-m}e^*(z)) \] has all its
zeros on the unit circle.
\end{lem}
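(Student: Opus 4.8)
The plan is to recognise the polynomial in the statement as an instance of the form $z^{d-n}h(z)+\lambda h^*(z)$ treated in Theorem \ref{T-1}, applied to a suitable auxiliary polynomial, and then to verify that this auxiliary polynomial satisfies the hypothesis of that theorem, namely that all of its zeros lie in $|z|\leq1$.

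First I would set $H(z)=z^{k-n}h(z)+e(z)$. Since $k>\max\{m,n\}\geq m$, the term $z^{k-n}h(z)$ has degree $k$ and dominates $e(z)$, so $\deg H=k$ with leading coefficient equal to that of $h$. A short computation with the star operation, using $h^*(z)=z^n\overline h(1/z)$ and $e^*(z)=z^m\overline e(1/z)$, gives $H^*(z)=h^*(z)+z^{k-m}e^*(z)$. Since also $z^kH(z)=z^{2k-n}h(z)+z^ke(z)$, the polynomial in the statement is exactly
\[
z^kH(z)+\lambda H^*(z),
\]
which is the polynomial $P^{\{\lambda\}}$ of Theorem \ref{T-1} with $H$ in place of $h$, with $n$ replaced by $k$, and with $d=2k$. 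In particular this confirms that it is self-inversive. By Theorem \ref{T-1}, it therefore suffices to prove that $H$ has all of its zeros in the closed unit disk.

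To establish this I would argue by Rouch\'e's Theorem together with a continuity (homotopy) argument. On $|z|=1$ we have $|z^{k-n}h(z)|=|h(z)|\geq c$ and $|e(z)|\leq c$; in particular $|h|\geq c>0$ on the circle forces $h$ to have no zeros there, so all $n$ zeros of $h$ lie in $|z|<1$. Hence $z^{k-n}h(z)$ has all $k$ of its zeros in the open disk (with the extra $k-n$ at the origin). For $t\in[0,1)$ set $H_t(z)=z^{k-n}h(z)+t\,e(z)$; on $|z|=1$ one gets the \emph{strict} inequality $|t\,e(z)|\leq tc<c\leq|z^{k-n}h(z)|$, so Rouch\'e's Theorem shows $H_t$ has the same number of zeros in $|z|<1$ as $z^{k-n}h(z)$, namely all $k$ of them. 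Since $\deg H_t=k$ with leading coefficient independent of $t$, the $k$ zeros depend continuously on $t$; letting $t\to1^-$ shows that the zeros of $H=H_1$ all lie in the closed disk $|z|\leq1$, as required.

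The main obstacle is precisely this passage from the non-strict inequality $|e(z)|\leq c\leq|h(z)|$ on the unit circle to the conclusion that $H$ has its zeros in $|z|\leq1$: Rouch\'e's Theorem needs a strict inequality, which is why I introduce the scaling parameter $t$ and recover the boundary case by continuity of the zeros as $t\to1$. Once $H$ is known to have all its zeros in the closed disk, the conclusion is immediate from Theorem \ref{T-1}, so no further perturbation estimates are needed.
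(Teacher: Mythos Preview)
Your proof is correct and follows essentially the same route as the paper: set $H(z)=z^{k-n}h(z)+e(z)$, check that $H^*(z)=h^*(z)+z^{k-m}e^*(z)$ so that the given polynomial is $z^kH(z)+\lambda H^*(z)$, use Rouch\'e to show $H$ has all its zeros in the closed disk, and invoke Theorem~\ref{T-1}. The only cosmetic difference is in handling the non-strict inequality: the paper first assumes $|e(z)|\leq c'<c$ and then passes to $|e(z)|\leq c$ ``by continuity'', whereas you encode the same idea as a homotopy $H_t=z^{k-n}h+te$ and let $t\to1^-$; these are equivalent.
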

\begin{proof}
We first assume that for some positive $c'<c$ we have $|e(z)|\leq c'<c$ for all $z$ with  $|z|=1$.
Now $h_e(z)=z^{k-n}h(z)+e(z)$ is a polynomial of degree $k$.
Because $|z^{k-n}h(z)|\geq c>c'\geq|e(z)|$ for $|z|=1$, Rouch\'e's
Theorem tells us that $h_e$ has all its zeros in the open unit disk
$|z|<1$. Also $h_e^*(z)=z^kh_e(1/z)=h^*(z)+z^{k-m}e^*(z)$.
 Now apply Theorem \ref{T-1} with $h$ replaced by $h_e$ and $d$ replaced by $k$.
 
 The general case, where we assume only that $|e(z)|\leq c$ for $|z|=1$, then follows by continuity.
 \end{proof}

 \section{Application to the polynomials $P_k$}\label{S-3}

 Let $k\geq 2$, and, as in \cite{LR}, define $P_k(z)$ by \eqref{E-Pk}.
 The study of this polynomial is motivated by the fact that it appears in a formula by Ramanujan \cite[~p. 276]{Be2}:
\begin{equation}\label{zeta(3) series}
\frac{1}{2z^k} P_k(z)=
(-z)^{-(k-1)}\sum_{n=1}^{\infty}\frac{1}{n^{2k-1}(e^{2\pi n z}-1)}-z^{k-1}\sum_{n=1}^{\infty}\frac{1}{n^{2k-1}(e^{2\pi n/z}-1)},
\end{equation}
valid for $z \not \in i \Q$. 
A variant of the polynomial $P_k(z)$ (without the term with the $\zeta$-value) was first considered by Gun, Murty, and Rath \cite{GMR} in the context of expressing
the special value of the $\zeta$-function as an Eichler integral that could yield information about the algebraic nature of the number. Murty, Smyth, and Wang \cite{MSW} studied this
variant of the polynomial  and found that all but four of its zeros lie on the unit circle. Finally, other variants of $P_k(z)$ 
were considered in \cite{LR}, and were shown to have all their zeros on the unit circle. However, the methods from \cite{LR} were not sufficient to prove that $P_k(z)$ itself
has all its zeros on the unit circle.

\begin{thm}\label{T-2} For all $k\in\N$, the polynomial $P_k$ has all its
zeros on the unit circle.
\end{thm}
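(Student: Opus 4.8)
The plan is to apply the perturbation result Lemma \ref{L-2}, after first rewriting $P_k$ in a form where the bulk of the polynomial is a single self-inversive polynomial (automatically having all zeros on the unit circle, by Theorem \ref{T-1}) and the remainder is a small error concentrated away from the two ends. The first step is to replace Bernoulli numbers by zeta values: using the exact relation $B_{2j}=(-1)^{j+1}\tfrac{2(2j)!}{(2\pi)^{2j}}\zeta(2j)$ together with the convention $\zeta(0)=-\tfrac12$, a direct simplification of \eqref{E-Pk} should collapse the factorials and binomial coefficient and give the clean closed form
\[
P_k(z)=\frac{2}{\pi}\sum_{j=0}^{k}(-1)^{j+k}\zeta(2j)\zeta(2k-2j)\,z^{2j}+\zeta(2k-1)\bigl(z^{2k-1}+(-1)^kz\bigr).
\]
In particular $P_k$ is self-inversive of degree $2k$ with $\eps=(-1)^k$, but every even coefficient has modulus roughly $\tfrac2\pi$, so the coefficients do \emph{not} decay across the middle; this is precisely what blocks a direct application of Lemma \ref{L-2}.

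The key maneuver is to multiply by the fixed factor $1+z^2$. Because the factor $(-1)^j$ alternates, consecutive even coefficients carry opposite signs, so in $\hat P_k:=(1+z^2)P_k$ the coefficient of $z^{2j}$ becomes proportional to $\zeta(2j)\zeta(2k-2j)-\zeta(2j-2)\zeta(2k-2j+2)$, which I expect to decay geometrically (roughly like $2^{-2\min(j,\,k-j)}$) away from the two ends, since each $\zeta$-factor tends to $1$. The two odd terms are sent to $z^{2k+1},z^{2k-1},(-1)^kz^3,(-1)^kz$, all lying at the extreme ends. Thus $\hat P_k$ is self-inversive of degree $2k+2$ whose significant coefficients are concentrated in two blocks at the top and bottom with a geometrically small middle; and since $1+z^2$ has its zeros $\pm i$ on the unit circle, it suffices to prove that $\hat P_k$ has all its zeros on the unit circle.

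I would then feed $\hat P_k$ into Lemma \ref{L-2} with $\la=(-1)^k$ and the lemma's parameter taken to be $k+1$ (so the total degree is $2k+2$). Here $h$ is chosen to be the polynomial of some fixed degree $n$ whose coefficients are the top $n+1$ coefficients of $\hat P_k$; for each fixed distance from the top these converge as $k\to\infty$ to fixed limits built from the values $\zeta(2i)$ and from $\lim\zeta(2k-1)=1$, so $h$ tends to a fixed limit polynomial $h_\infty$ independent of $k$. The remaining, geometrically small, middle coefficients are packaged into the error $e$, whose degree may grow with $k$ but whose supremum on $|z|=1$ is controlled by a convergent geometric tail of order $2^{-n}$.

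The hard part is verifying the two hypotheses of Lemma \ref{L-2} for a well-chosen fixed $n$, uniformly in $k$: that $h$ has all its zeros in $|z|\le1$, and that $\min_{|z|=1}|h(z)|\ge c\ge\sup_{|z|=1}|e(z)|$. Low-degree truncations fail outright — the degree-one end polynomial $1-\tfrac1\pi z$ already has a zero at $z=\pi$ — so $n$ cannot be small, and the genuine content is an analysis of the zero locations and of the minimum modulus on the unit circle of the truncated zeta/cotangent generating polynomial $h_\infty$, whose structure is governed by the identity $\sum_{i\ge0}(-1)^i\zeta(2i)w^i=-\tfrac\pi2\sqrt w\coth(\pi\sqrt w)$ (a function with only real zeros, which I would try to exploit). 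The aim is to show that for all sufficiently large fixed $n$ the zeros of $h$ lie in the disk and $\min_{|z|=1}|h|$ stays bounded below by a constant exceeding the $O(2^{-n})$ error bound; with $n$ then fixed, Lemma \ref{L-2} yields the conclusion for all large $k$, while the finitely many remaining small values of $k$ are handled by the computations of Lal\'in and Rogers \cite{LR} or by direct verification. Dividing out the factor $1+z^2$ finally returns the result for $P_k$ itself.
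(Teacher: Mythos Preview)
Your proposal is correct and follows essentially the same route as the paper: rewrite $P_k$ via the identity $B_{2j}=(-1)^{j+1}\tfrac{2(2j)!}{(2\pi)^{2j}}\zeta(2j)$, multiply by $1+z^2$ so that the middle coefficients collapse to differences $\zeta(2j)\zeta(2k-2j)-\zeta(2j-2)\zeta(2k-2j+2)$ that decay geometrically, extract a fixed-degree ``top'' polynomial $h$ with the rest as a small error $e$, and apply Lemma~\ref{L-2}. The one substantive difference is in how the hypotheses on $h$ are verified: the paper simply fixes the truncation level $r=4$ (so $h=h_4$ has degree~$8$, already independent of $k$), computes that $h_4$ has all zeros in $|z|<1$ with $\min_{|z|=1}|h_4(z)|>0.020$, and bounds $|e_4(z)|<0.019$ for $k\ge 11$ using the explicit estimates of Lemma~\ref{L-3}; the cases $k\le 10$ are covered by \cite{LR}. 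Your proposal instead aims to let $n$ be large and argue via the $\coth$ generating function that the truncations eventually satisfy the hypotheses, which is more ambitious but unnecessary---and note that $h$ is not a pure truncation of that series, since the $\zeta(2k-1)$ term contributes two odd-degree monomials (the paper's $-\pi z^{2r-1}-\pi z^{2r-3}$) that must be folded in.
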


 We need the following straightforward bounds.
 Put $q_j=\frac{\zeta(2j)\zeta(2k-2j)}{\zeta(2k)}$ for $j=0,1,\ldots, k$ and $\delta_j=q_j-\zeta(2j)$. Note that  $\delta_0=0$ and $\delta_j>0$ for $0<j<k$.
\begin{lem}\label{L-3}
\begin{itemize}
\item[(i)] For $n\geq 2$ we have $$1<\zeta(n)<1+\frac{n+1}{n-1}\cdot 2^{-n}.$$
\item[(ii)] For $k\geq 2$ and $j=1,2,\ldots,k-1$ we have
\[ 0<\frac{\zeta(2k-2j)}{\zeta(2k)}-1<3\cdot 4^{j-k}. \]
\item[(iii)] For $k\geq 11$  we have
\[ 0<\frac{\zeta(2k-1)}{\zeta(2k)}-1<\tfrac{11}{5}\cdot 4^{-k}. \]
\item[(iv)] For $k\geq 4$
and $j=1,2,\ldots, k-1$ we have
\[
|\delta_{j-1}-\delta_j|<\begin{cases} 21\cdot 4^{-k} \qquad\qquad\qquad\text{ if } j=1;\\
3\cdot 4^{-k}\left(4^j+\tfrac{2j-1}{2j-3}\right)\quad\text{ if } j\geq 2. \end{cases}
\]
\item[(v)] For $k\geq 2$
and $2\leq j\leq k/2$ we have $q_j=q_{k-j}$ and
\[ |q_{j-1}-q_j|<3\cdot
4^{-k}\left(4^j+\tfrac{2j-1}{2j-3}\right)+\tfrac{2j-1}{2j-3}\cdot 4^{1-j}.\]
\item[(vi)]  For $k\geq 4$ and $4\leq r\leq k$ we have

\[ \sum_{j=1}^r |\delta_{j-1}-\delta_j|<5 \cdot 4^{r-k}.
\]
 
\item[(vii)] For $k\geq 10$ and $r\geq 4$ we have
\[ \sum_{j=r+1}^{\lfloor\frac{k}{2}\rfloor}|q_{j-1}-q_j|< 5\cdot 2^{-k}+\tfrac{12}{7}\cdot 4^{-r}. \]
\end{itemize}
\end{lem}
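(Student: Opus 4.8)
The plan is to prove the seven estimates in order, since part~(i) is the only genuine analytic input and every later part is bookkeeping built on it. First I would establish (i). Writing $\zeta(n)-1=\sum_{m\geq 2}m^{-n}$, the lower bound is immediate, and for the upper bound I would peel off the first term and compare the remainder to an integral, $\sum_{m\geq 2}m^{-n}<2^{-n}+\int_2^\infty x^{-n}\,dx=\tfrac{n+1}{n-1}2^{-n}$. This is the master estimate that drives everything else.

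Parts (ii) and (iii) then follow mechanically from the elementary fact that for $1<a<b$ one has $\tfrac{\zeta(a)}{\zeta(b)}-1=\tfrac{\zeta(a)-\zeta(b)}{\zeta(b)}<\zeta(a)-1$ (because $\zeta(b)>1$): applied with $(a,b)=(2k-2j,2k)$ and fed into (i) it gives (ii), where the prefactor $\tfrac{2k-2j+1}{2k-2j-1}\leq 3$ and $2^{-(2k-2j)}=4^{j-k}$; applied with $(a,b)=(2k-1,2k)$ it gives (iii), using $\tfrac{2k}{2k-2}\leq\tfrac{11}{10}$ for $k\geq 11$ and $2^{-(2k-1)}=2\cdot 4^{-k}$. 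Before the remaining parts I would record the algebraic identities $\delta_j=\zeta(2j)\bigl(\tfrac{\zeta(2k-2j)}{\zeta(2k)}-1\bigr)$ and $q_j=\zeta(2j)+\delta_j$, together with the symmetry $q_j=q_{k-j}$, all immediate from the definitions.

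Part (iv) is the crux. Writing $a_i:=\tfrac{\zeta(2k-2i)}{\zeta(2k)}-1$, the case $j=1$ is just $\delta_1=\zeta(2)a_1$, controlled by (ii) and $\zeta(2)<\tfrac74$. For $j\geq 2$ I would use the split $\delta_{j-1}-\delta_j=a_{j-1}\bigl(\zeta(2j-2)-\zeta(2j)\bigr)+\zeta(2j)\bigl(a_{j-1}-a_j\bigr)$: the first summand, bounded by $a_{j-1}<\tfrac34 4^{j-k}$ from (ii) times $\zeta(2j-2)-\zeta(2j)<\tfrac{2j-1}{2j-3}4^{1-j}$ from (i), yields the $\tfrac{2j-1}{2j-3}$ term, while the second, governed by $|a_{j-1}-a_j|<\zeta(2k-2j)-\zeta(2k-2j+2)<\zeta(2k-2j)-1\lesssim 3\cdot 4^{j-k}$ from (i), yields the $4^j$ term. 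Part (v) follows by combining (iv) with $q_{j-1}-q_j=(\delta_{j-1}-\delta_j)+(\zeta(2j-2)-\zeta(2j))$ and the bound $\zeta(2j-2)-\zeta(2j)<\tfrac{2j-1}{2j-3}4^{1-j}$ from (i), which supplies the extra summand.

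Parts (vi) and (vii) are summations controlled by finite geometric series. For (vi) I would sum the (iv)-bounds: the $\sum_{j=2}^r 4^j<\tfrac43 4^r$ part gives the leading $4\cdot 4^{r-k}$, while the $j=1$ term and the $\sum\tfrac{2j-1}{2j-3}$ part are lower order and, for $r\geq 4$, fit inside the gap up to $5\cdot 4^{r-k}$. For (vii) I would sum the (v)-bounds to $\lfloor k/2\rfloor$: all terms carrying $4^{-k}$ sum to $O(2^{-k})$ (the $\sum 4^j$ only reaches $4^{k/2}$) and are absorbed by $5\cdot 2^{-k}$, whereas the slowly-decaying tail $\sum_{j>r}\tfrac{2j-1}{2j-3}4^{1-j}$ is geometric with sum $\tfrac43 4^{-r}$, and since $j\geq r+1\geq 5$ forces $\tfrac{2j-1}{2j-3}\leq\tfrac97$, it contributes at most $\tfrac97\cdot\tfrac43 4^{-r}=\tfrac{12}{7}4^{-r}$, the dominant term. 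I expect the main obstacle to be (iv): the split must simultaneously capture the exponential growth $4^j$ coming from $\zeta(2k-2j)$ near the centre and the mild $\tfrac{2j-1}{2j-3}$ behaviour coming from $\zeta(2j)-1$, while preserving a clean uniform factor $4^{-k}$ and honest constants, and the endpoint $j=1$ must be handled separately because $\delta_0=0$ breaks the telescoping pattern. Once (iv) has the right shape, parts (v)--(vii) reduce to the triangle inequality plus geometric summation.
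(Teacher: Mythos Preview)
Your overall plan is correct and, for parts (i)--(iii) and (v)--(vii), essentially coincides with the paper's proof: the integral comparison for (i), the reduction $\zeta(a)/\zeta(b)-1<\zeta(a)-1$ for (ii) and (iii), the identity $q_{j-1}-q_j=(\delta_{j-1}-\delta_j)+(\zeta(2j-2)-\zeta(2j))$ for (v), and straightforward geometric summation for (vi) and (vii) are exactly what the paper does.

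The one substantive difference is part (iv). You attack $\delta_{j-1}-\delta_j$ by the algebraic split
\[
\delta_{j-1}-\delta_j=a_{j-1}\bigl(\zeta(2j-2)-\zeta(2j)\bigr)+\zeta(2j)\bigl(a_{j-1}-a_j\bigr),
\]
and then bound each piece. The paper's argument is much shorter: since $\delta_{j-1}\geq 0$ and $\delta_j\geq 0$, one simply has $|\delta_{j-1}-\delta_j|\leq\max(\delta_{j-1},\delta_j)$, and it suffices to bound $\delta_j$ itself. Feeding (i) and (ii) into $\delta_j=\zeta(2j)\bigl(\tfrac{\zeta(2k-2j)}{\zeta(2k)}-1\bigr)$ gives $\delta_j<3\cdot 4^{-k}\bigl(4^j+\tfrac{2j+1}{2j-1}\bigr)$, and since $\tfrac{2j+1}{2j-1}\leq\tfrac{2j-1}{2j-3}$ and $4^{j-1}<4^j$, the maximum over $\{j-1,j\}$ is at most the stated bound. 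This avoids your split entirely.

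Your route can be made to work, but as written it has a small leak: in the second summand you multiply $|a_{j-1}-a_j|\lesssim 3\cdot 4^{j-k}$ by $\zeta(2j)>1$, so a naive triangle inequality overshoots $3\cdot 4^{-k}\cdot 4^j$. You would need either to exploit that the two summands have opposite signs (so the absolute value is bounded by their maximum, not their sum), or to absorb the excess $\zeta(2j)-1<\tfrac{2j+1}{2j-1}4^{-j}$ into the secondary term. Either fix is easy, but the paper's one-line observation that nonnegative $\delta$'s give $|\delta_{j-1}-\delta_j|\leq\max(\delta_{j-1},\delta_j)$ sidesteps the whole issue.
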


\begin{proof}
 Part (i)-(iii)  are  easy -- see \cite[Lemmas 4.4 and 4.6]{MSW} for (i) and (ii). For (iv), we have, using (i) and (ii), that
 \begin{align*}
 \delta_j&=\zeta(2j)\left(\tfrac{\zeta(2k-2j)}{\zeta(2k)}-1\right)\\
 &<\left(1+\tfrac{2j+1}{2j-1}\cdot 4^{-j}\right)\cdot 3\cdot 4^{j-k}\\
 &=3\cdot 4^{-k}\left(4^j+\tfrac{2j+1}{2j-1}\right).
 \end{align*}
 Hence
 \[
 |\delta_0-\delta_1|=\delta_1<21\cdot 4^{-k},
 \]
 while in general 
 \[ |\delta_{j-1}-\delta_j|\le\max(\delta_{j-1},\delta_j),
 \]
 from which the result for $j\geq 2$ follows.

   For (v), we have 
    \[
|q_{j-1}-q_j|\leq |\delta_{j-1}-\delta_j|+\zeta(2j-2)-\zeta(2j),\\
\]
which gives the result using (ii) and (iv).

For (vi), we have, using (iv), that
\begin{align*}
\sum_{j=1}^r|\delta_{j-1}-\delta_j| &<21\cdot 4^{-k}+3\cdot 4^{-k}\sum_{j=2}^r \left(4^j+\tfrac{2j-1}{2j-3}\right)\\
&<4^{r-k}\left( 21\cdot 4^{-r}+3\sum_{j=-\infty}^r 4^{j-r}+3\cdot 3\cdot(r-1)\cdot 4^{-r}\right)\\
&<4^{r-k}\left(21/4^4+4+27/4^4\right)\\
&< 5\cdot 4^{r-k}.
 \end{align*}

 For (vii), as we have $j\geq 5$ in the summand, and $r\geq 4$, we obtain, using (v), that
 \begin{align*}
\sum_{j=r+1}^{\lfloor\frac{k}{2}\rfloor}|q_{j-1}-q_j|&\leq \sum_{j=r+1}^{\lfloor\frac{k}{2}\rfloor}\left(3\cdot 4^{-k}\left(4^j+\tfrac{2j-1}{2j-3}\right)+\tfrac{2j-1}{2j-3}\cdot 4^{1-j}\right)\\
& < \sum_{j=-\infty}^{\lfloor\frac{k}{2}\rfloor} 3\cdot 4^{j-k}+3\cdot 4^{-k}\cdot \tfrac{9}{7}\left(\left\lfloor\tfrac{k}{2}\right\rfloor-r\right)+\tfrac{9}{7}\sum_{j=r+1}^\infty 4^{1-j}\\
& \leq 3\cdot 4^{-k/2}\cdot \tfrac{4}{3}+2^{-k}\cdot \left(\tfrac{27}{7}\left(\tfrac{k}{2}-4\right)2^{-k}\right)+\tfrac97\cdot4^{-r}\cdot\tfrac43\\
& \leq 2^{-k}(4+1)+\tfrac{12}{7}\cdot 4^{-r}.
 \end{align*}

\end{proof}
We also need the standard identity \[
\frac{B_{2j}}{(2j)!}=(-1)^{j+1}\frac{2\zeta(2j)}{(2\pi)^{2j}}, \]
valid for all $j\geq 0$, since $B_0=1$ and $\zeta(0)=-1/2$. From this we see
that \[
P_k(z)=(-1)^k\frac{2}{\pi}\sum_{j=0}^k(-z^2)^j\zeta(2j)\zeta(2k-2j)
+ \zeta(2k-1)(z^{2k-1}+(-1)^kz). \]
Thus $P_k$ has leading coefficient $-\zeta(2k)/\pi$. To show that
$P_k$ has all its zeros on the unit circle it is sufficient to show
that the monic polynomial
$M_k(z)=-\frac{\pi}{\zeta(2k)}(z^2+1)P_k(z)$ has all its zeros on
the unit circle. (Most of the coefficients of $M_k$ are very small,
making it easier to work with than $-\frac{\pi}{\zeta(2k)}P_k$, whose coefficient of $z^{2j}$ is close to $2(-1)^{j+k+1}$ for most $j$.) We easily calculate that
\begin{align*}
M_k(z)=z^{2k+2}+(-1)^k-&\frac{\pi\zeta(2k-1)}{\zeta(2k)}(z^{2k+1}+z^{2k-1}
+(-1)^kz^3+(-1)^kz)\\ 
&+2\sum_{j=1}^k(-1)^jz^{2k+2-2j}(q_{j-1}-q_j).
\end{align*}
Hence, using the fact that for $k$ odd and $j=(k+1)/2$ we have $q_{j-1}-q_j=0$, we obtain
\begin{align*}
z^{-(k+1)}M_k(z)&=z^{k+1}+(-1)^kz^{-(k+1)}
-\frac{\pi\zeta(2k-1)}{\zeta(2k)}(z^{k}+z^{k-2}+(-1)^k(z^{2-k}+z^{-k}))\\
&\qquad+2\sum_{j=1}^{{\lfloor\frac{k}{2}\rfloor}}(q_{j-1}-q_j)(-1)^j(z^{k+1-2j}+(-1)^kz^{-(k+1-2j)})\\
&=z^{k+1}+(-1)^kz^{-(k+1)}-\pi(z^{k}+z^{k-2}+(-1)^k(z^{2-k}+z^{-k}))\\
&\qquad+2\sum_{j=1}^{r}(\zeta(2j-2)-\zeta(2j))(-1)^j(z^{k+1-2j}+(-1)^kz^{-(k+1-2j)})\\
&\qquad+2\sum_{j=r+1}^{{\lfloor\frac{k}{2}\rfloor}}(q_{j-1}-q_j)(-1)^j(z^{k+1-2j}+(-1)^kz^{-(k+1-2j)})\\
&\qquad+2\sum_{j=1}^r(\delta_{j-1}-\delta_j)(-1)^j(z^{k+1-2j}+(-1)^kz^{-(k+1-2j)})\\
&\qquad-\pi\left(\frac{\zeta(2k-1)}{\zeta(2k)}-1\right)(z^{k}+z^{k-2}+(-1)^k(z^{2-k}+z^{-k}))\\
&=z^{-(k+1)}H_r(z)+z^{-(k+1)}E_r(z),
\end{align*}
where 
\begin{align*}
H_r(z) &=z^{2k+2}+(-1)^k-\pi(z^{2k+1}+z^{2k-1}+(-1)^k(z^3+z))\\
&\qquad+2\sum_{j=1}^{r}(\zeta(2j-2)-\zeta(2j))(-1)^j(z^{2k+2-2j}+(-1)^kz^{2j})\\
E_r(z)&=-\pi\left(\frac{\zeta(2k-1)}{\zeta(2k)}-1\right)\left(z^{2k+1}+z^{2k-1}+(-1)^k(z^{3}+z)\right)\\
& \qquad+2\sum_{j=1}^r(\delta_{j-1}-\delta_j)(-1)^j(z^{2k+2-2j}+(-1)^kz^{2j})\\
& \qquad + 2\sum_{j=r+1}^{{\lfloor\frac{k}{2}\rfloor}}(q_{j-1}-q_j)(-1)^j(z^{2k+2-2j}+(-1)^kz^{2j}).
\end{align*}

Here $H_r$ is the main polynomial, with $E_r$ the error polynomial.

We can rewrite $H_r(z)$ as
\[ H_r(z)=z^{2k+2-2r}h_r(z)+(-1)^kh_r^*(z),
\]
where
\begin{equation}\label{E-h_r}
 h_r(z)= z^{2r}-\pi z^{2r-1}-\pi z^{2r-3}+2\sum_{j=1}^r(\zeta(2j-2)-\zeta(2j))(-1)^jz^{2r-2j}, 
 \end{equation}
and $E_r(z)$ as
\[ E_r(z)=z^{k+2}e_r(z)+(-1)^kze^*_r(z),\]
where
\begin{align*} e_r(z) &= -\pi\left(\frac{\zeta(2k-1)}{\zeta(2k)}-1\right)( z^{k-1}+ z^{k-3})
+2\sum_{j=1}^r(\delta_{j-1}-\delta_j)(-1)^jz^{k-2j}\\
&\qquad\qquad+2\sum_{j=r+1}^{{\lfloor\frac{k}{2}\rfloor}}(q_{j-1}-q_j)(-1)^jz^{k-2j}.
\end{align*}

We now take $r=4$. We then have the following bound.
\begin{lem}\label{L-6} For $k\geq 11$  and $|z|=1$ we have $|e_4(z)|\leq 0.019$.
\end{lem}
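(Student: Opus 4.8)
The plan is to bound $|e_4(z)|$ for $|z|=1$ by the triangle inequality, estimating the modulus of each group of terms separately using the explicit coefficient bounds already proved in Lemma \ref{L-3}. On $|z|=1$ every power $z^m$ has modulus $1$, so the modulus of each term is just the modulus of its coefficient, and I would split $e_4(z)$ into its three natural pieces exactly as they appear in its definition.

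First I would treat the $\zeta(2k-1)/\zeta(2k)$ term: since $r=4$, the factor $z^{k-1}+z^{k-3}$ has modulus at most $2$ on the unit circle, so this piece contributes at most $2\pi\left(\frac{\zeta(2k-1)}{\zeta(2k)}-1\right)$, which by Lemma \ref{L-3}(iii) is at most $2\pi\cdot\tfrac{11}{5}\cdot 4^{-k}$ for $k\geq 11$. Next, for the finite sum $2\sum_{j=1}^{4}(\delta_{j-1}-\delta_j)(-1)^j z^{k-2j}$, I would bound its modulus by $2\sum_{j=1}^{4}|\delta_{j-1}-\delta_j|$, and apply Lemma \ref{L-3}(vi) with $r=4$ to get at most $2\cdot 5\cdot 4^{4-k}=10\cdot 4^{4-k}$. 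Finally, for the tail $2\sum_{j=5}^{\lfloor k/2\rfloor}(q_{j-1}-q_j)(-1)^j z^{k-2j}$, I would bound its modulus by $2\sum_{j=5}^{\lfloor k/2\rfloor}|q_{j-1}-q_j|$ and invoke Lemma \ref{L-3}(vii) with $r=4$, obtaining at most $2\left(5\cdot 2^{-k}+\tfrac{12}{7}\cdot 4^{-4}\right)$.

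Adding the three bounds gives
\[
|e_4(z)|\le 2\pi\cdot\tfrac{11}{5}\cdot 4^{-k}+10\cdot 4^{4-k}+2\left(5\cdot 2^{-k}+\tfrac{12}{7}\cdot 4^{-4}\right),
\]
and I would then check that the right-hand side is at most $0.019$ for all $k\ge 11$. The dominant contribution is the $k$-independent term $\tfrac{24}{7}\cdot 4^{-4}=\tfrac{24}{7\cdot 256}\approx 0.0134$ coming from the tail; the remaining terms decay like $4^{-k}$ (and $2^{-k}$) and are therefore largest at $k=11$, where each is already tiny (on the order of $10^{-5}$ or smaller). So the inequality is comfortably satisfied with room to spare, and monotonicity in $k$ makes the verification immediate once the $k=11$ case is checked.

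The main obstacle, such as it is, is purely bookkeeping: one must keep careful track of which range of $j$ each sub-bound of Lemma \ref{L-3} applies to, and in particular confirm that the hypotheses $r\ge 4$ and $k\ge 10$ (resp.\ $k\ge 11$, $k\ge 4$) needed for parts (iii), (vi), (vii) are all met under the standing assumption $k\ge 11$ with $r=4$. There is no analytic difficulty; the estimate is loose enough that a crude summation of the three pieces suffices, and the only genuine arithmetic is confirming that the fixed term $\tfrac{24}{7}\cdot 4^{-4}$ together with the exponentially small remainders stays below the target constant $0.019$.
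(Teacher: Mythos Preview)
Your proposal is correct and follows exactly the paper's own argument: split $e_4$ into its three pieces, bound each on $|z|=1$ via the triangle inequality using Lemma~\ref{L-3}(iii), (vi), (vii), and sum to get $2\pi\cdot\tfrac{11}{5}\cdot 4^{-k}+2\cdot 5\cdot 4^{4-k}+2\cdot 5\cdot 2^{-k}+\tfrac{24}{7}\cdot 4^{-4}<0.019$ for $k\ge 11$. One small numerical remark: the $2\cdot 5\cdot 2^{-k}$ term at $k=11$ is about $4.9\times 10^{-3}$, not ``$10^{-5}$ or smaller,'' so the margin is thinner than you suggest (the total at $k=11$ is roughly $0.0189$), but the inequality still holds.
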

\begin{proof}  Take $z$ with $|z|=1$. Applying Lemma \ref{L-3} (iii), (vi) and (vii) 
  we have
\begin{align*}
|e_r(z)| & \leq \left|-\pi\left(\frac{\zeta(2k-1)}{\zeta(2k)}-1\right)( z^{k-1}+ z^{k-3})\right|
+\left|2\sum_{j=1}^r(\delta_{j-1}-\delta_j)(-1)^jz^{k-2j}\right|\\
&\qquad+\left|2\sum_{j=r+1}^{{\lfloor\frac{k}{2}\rfloor}}(q_{j-1}-q_j)(-1)^jz^{k-2j}\right|\\
& < 2\pi\cdot \tfrac{11}{5}\cdot 4^{-k}+ 2\cdot 5 \cdot 4^{r-k}+2\cdot 5\cdot 2^{-k}+2\cdot \tfrac{12}{7}\cdot 4^{-r},
\end{align*}
which is less than $0.019$ for $r=4$ and $k\geq 11$.
\end{proof}

\begin{proof}[Proof of Theorem \ref{T-2}] The result is known to be true for $k\leq 10$ -- see \cite{LR}. The polynomial
\begin{align*} h_4(z)&= z^8-\pi z^7+\left(1+\tfrac{\pi^2}{3}\right)z^6-\pi z^5+\left(\tfrac{\pi^2}{3}-\tfrac{\pi^4}{45}\right)z^4\\&+\left(\tfrac{2\pi^6}{945}-\tfrac{\pi^4}{45}\right)z^2+\left(\tfrac{2\pi^6}{945}-\tfrac{\pi^8}{4725}\right)\\
\end{align*}
given by (\ref{E-h_r})  has all its zeros of modulus less than $1$.  Furthermore, it is a matter of routine calculation to find  that its minimum on the unit circle occurs at 
 $z_0,\overline{z_0}\approx e^{\pm 0.20325951i}$, where  
$|h_4(z_0)|= 0.0214\ldots>0.020.$

So for $k\geq 11$ we can apply Lemma \ref{L-2} with $h=h_4$, $e=e_4$ and $c=0.020$ to obtain the required result. 
\end{proof}

\section{Applications to other polynomials}\label{S-4}

The method described in this work can be also used to study the other families of polynomials that appear in the statement of Claim 1.1 of \cite{LR}.
 First notice that the coefficients can be thought of as special values of variations of the Riemann zeta function or 
a Dirichlet $L$-function.
\begin{align}
(-1)^k\frac{\pi}{2^{2k+1}}Q_k(z):=&\sum_{j=1}^{k-1} \eta_0(2j)\eta_0(2k-2j) (-z^2)^j \notag\\& +(-1)^k\frac{\pi}{4} \eta_0(2k-1)(z^{2k-1}+(-1)^kz);\\
\frac{(-1)^k}{4}Y_{k}(z)=&\sum_{j=1}^{k-1} \eta_0(2j)\eta_0(2k-2j) z^j; \\
(-1)^k \frac{\pi}{2^{2k+1}}W_k(z)=&\sum_{j=0}^k  \eta(2j)\eta(2k-2j) (-z^2)^j;\\
\frac{(-1)^k}{(2k)!4} \left(\frac{\pi}{2}\right)^{2k+2}S_k(z)=&\sum_{j=0}^{k}L(2j+1,\chi_4)L(2k-2j+1,\chi_4) z^j.
\end{align}
We have used that
\[L(2j+1,\chi_4)=(-1)^j\frac{E_{2j}}{2(2j)!}\left(\frac{\pi}{2}\right)^{2j+1},\]
where the $E_{2j}$ are the Euler numbers given by 
\[\frac{2}{e^t+e^{-t}}=\sum_{n=0}^\infty \frac{E_n}{n!} t^n.\]
We have also used the notation
\[\eta(s)=(1-2^{1-s})\zeta(s);\]
and defined
\[\eta_0(s):=(1-2^{-s})\zeta(s).\]

The polynomial families $Y_k(z)$ and $S_k(z)$ were studied with the aid of Schinzel's result \eqref{Lakatos condition 2} in \cite{LR} and do not need 
further consideration here, by virtue of the fact that Corollary \ref{C-Schinzel} follows from Theorem \ref{simple result}. We proceed to outline the proofs for the other polynomials $Q_k$ and $W_k$.
It is easy to prove equivalent results to those of Lemma \ref{L-3}  for the other functions. Here we give the corresponding bounds for $\eta$ and $\eta_0$ (the functions involved in $Q_k(z)$ and $W_k(z)$), without proof. 

For $Q_k(z)$, we let $q_j=\frac{\eta_0(2j)\eta_0(2k-2j)}{\eta_0(2k-1)}$ for $j=0,1,\ldots, k$ and $\delta_j=q_j-\eta_0(2j)$. On the other hand, for $W_k(z)$, we put $q_j=\frac{\eta(2j)\eta(2k-2j)}{\eta(2k)}$ for $j=0,1,\ldots, k$ and $\delta_j=\eta(2j)-q_j$. 
As before, $\delta_0=0$ and $\delta_j>0$ for $0<j<k$ for both $\eta$ and $\eta_0$.

\begin{lem}\label{L-5}
\begin{itemize}
\item[(i)] For $n\geq 2$, 
\[1<\eta_0(n)<1+2^{-n};\]
\[1-2^{1-n} <\eta(n)<1. \]
\item[(ii)] For $k\geq 2$ and $j=1,2,\ldots,k-1$ we have
\[0<\frac{\eta_0(2k-2j)}{\eta_0(2k-1)}-1<2^{-2k+2j};\]
\[0<1-\frac{\eta(2k-2j)}{\eta(2k)}<2^{1-2k+2j}.\]
\item[(iii)] For $j=1,2,\ldots,k-1$, we have, 
\[ 0<\delta_j<2^{1-2k+2j}, \]
for both $\eta_0$ and $\eta$.
\item[(iv)] For $k\geq 2$
and $j=1,2,\ldots,k-1$ we have $q_j=q_{k-j}$ and
\[ |q_{j-1}-q_j|< 2^{2-2j} + 2^{2-2k+2j}  \mbox{  for } \eta_0;\]
\[ |q_{j-1}-q_j|< 2^{4-2j} + 2^{2-2k+2j}  \mbox{  for } \eta.\]
\item[(v)] For $r\geq 1$, we have, 
\[ \sum_{j=1}^r|\delta_{j-1}-\delta_j|<\tfrac{2}{3} \cdot 4^{r+1-k},\]
for both $\eta_0$ and $\eta$.
\item[(vi)] 
\[ \sum_{j=r+1}^{\lfloor\frac{k}{2}\rfloor}|q_{j-1}-q_j|<\tfrac{4}{3} \cdot (2^{-2r}+2^{2-k})  \mbox{  for } \eta_0;\]
\[ \sum_{j=r+1}^{\lfloor\frac{k}{2}\rfloor}|q_{j-1}-q_j|<\tfrac{16}{3} \cdot (2^{-2r}+2^{-k})  \mbox{  for } \eta.\]

\end{itemize}
\end{lem}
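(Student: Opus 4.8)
The plan is to follow the proof of Lemma \ref{L-3} line for line, exploiting that both auxiliary functions have cleaner series than $\zeta$ itself. Summing $\zeta$ over the odd integers gives $\eta_0(s)=\sum_{m\ \mathrm{odd}}m^{-s}=1+3^{-s}+5^{-s}+\cdots$, a positive Dirichlet series that decreases in $s$, while $\eta(s)=\sum_{m\geq1}(-1)^{m-1}m^{-s}$ is the alternating zeta function, which increases to $1$. I would run the two functions through the argument in parallel, the only structural difference being the sign convention $\delta_j=q_j-\eta_0(2j)$ for $Q_k$ versus $\delta_j=\eta(2j)-q_j$ for $W_k$, which is exactly what makes both families of $\delta_j$ positive. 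Part (i) is the base of everything: for $\eta$ it is immediate from the alternating-series bracketing $1-2^{-n}<\eta(n)<1$, which is already stronger than the stated bound; for $\eta_0$ the lower bound is trivial, and the upper bound is the tail estimate $\eta_0(n)-1=\sum_{m\geq3,\ \mathrm{odd}}m^{-n}<2^{-n}$, which I would obtain for $n\geq3$ by the comparison $\eta_0(n)-1<\zeta(n)-1-2^{-n}<2^{-n}$ (the last step being $\zeta(n)<1+2^{1-n}$, a consequence of Lemma \ref{L-3}(i) once $n\geq3$), and by direct evaluation at $n=2$.

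Parts (ii) and (iii) then follow formally. Monotonicity gives the positivity in (ii), and for the $\eta_0$ upper bound I would write
\[
\frac{\eta_0(2k-2j)}{\eta_0(2k-1)}-1<\eta_0(2k-2j)-\eta_0(2k-1)<\eta_0(2k-2j)-1<2^{-2k+2j},
\]
using $\eta_0(2k-1)>1$ and part (i), with the symmetric computation for $\eta$. For (iii) I would factor $\delta_j=\eta_0(2j)\bigl(\tfrac{\eta_0(2k-2j)}{\eta_0(2k-1)}-1\bigr)$, resp. $\delta_j=\eta(2j)\bigl(1-\tfrac{\eta(2k-2j)}{\eta(2k)}\bigr)$, and bound the first factor by part (i) (it is $<2$, resp. $<1$) and the second by part (ii), giving $\delta_j<2^{1-2k+2j}$ in both cases.

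For (iv)-(vi) I would use the decompositions $q_j=\eta_0(2j)+\delta_j$ and $q_j=\eta(2j)-\delta_j$. The symmetry $q_j=q_{k-j}$ is immediate from the definition. Writing $q_{j-1}-q_j=\bigl(\eta_0(2j-2)-\eta_0(2j)\bigr)+(\delta_{j-1}-\delta_j)$, I would bound the first bracket by part (i), namely $\eta_0(2j-2)-\eta_0(2j)<\eta_0(2j-2)-1<2^{2-2j}$, and the second by $\max(\delta_{j-1},\delta_j)<2^{1-2k+2j}$ from part (iii); this yields (iv), the $\eta$ case being identical with the slightly larger constant $2^{4-2j}$ coming from $\eta(2j)-\eta(2j-2)<2^{3-2j}$. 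Parts (v) and (vi) are then geometric-series bookkeeping: $\sum_{j=1}^r|\delta_{j-1}-\delta_j|<\sum_{j=1}^r2^{1-2k+2j}<\tfrac23\,4^{r+1-k}$, and summing the bound of (iv) over $r+1\leq j\leq\lfloor k/2\rfloor$ produces the two geometric contributions $\tfrac43\,2^{-2r}$ and $\tfrac{16}{3}\,2^{-k}=\tfrac43\,2^{2-k}$ (with the correspondingly larger constant for $\eta$, where $4^{\lfloor k/2\rfloor}\leq2^{k}$ is used to control the second sum).

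The only genuinely delicate point is the near-tight constant in part (i) for $\eta_0$: the inequality $\eta_0(n)-1<2^{-n}$ is essentially sharp at $n=2$, where the left side is $\pi^2/8-1\approx0.234$ against $0.25$, so a crude integral comparison is too lossy and one is forced either to reduce to Lemma \ref{L-3}(i) for $n\geq3$ or to check $n=2$ by hand. Beyond that, the main thing to watch is that $\eta$ and $\eta_0$ have opposite monotonicity and that $\delta_j$ is defined with opposite signs in the two cases; these must be tracked consistently so that every $\delta_j$ is positive and the triangle-inequality steps in (iv)-(vi) point in the right direction.
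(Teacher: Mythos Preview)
The paper does not actually give a proof of this lemma: immediately before stating it, the authors write ``Here we give the corresponding bounds for $\eta$ and $\eta_0$ \ldots\ without proof.'' So there is nothing to compare against, and your plan of transcribing the argument of Lemma~\ref{L-3} with $\zeta$ replaced by $\eta_0$ or $\eta$ is exactly the intended filling-in. Your sketch is correct in all essentials: the series representations give (i) directly (with the $n=2$ case of the $\eta_0$ bound checked by hand, as you note), monotonicity of $\eta_0$ downwards and of $\eta$ upwards gives the positivity in (ii), and the upper bounds in (ii)--(iii) and the geometric sums in (v)--(vi) go through just as you describe.

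One small caveat worth recording. Your chain for (iv) in the $\eta_0$ case, $\eta_0(2j-2)-\eta_0(2j)<\eta_0(2j-2)-1<2^{2-2j}$, uses part (i) at the argument $2j-2$ and so needs $j\ge2$. In fact the stated inequality in (iv) is not true at $j=1$ for $\eta_0$: there $q_0=0$ (since $\eta_0(0)=0$) while $q_1\to\eta_0(2)=\pi^2/8\approx1.234$ as $k\to\infty$, whereas the claimed bound is $1+2^{4-2k}\to1$. This is harmless for the paper, since part (vi) and its application to $Q_k$ only invoke (iv) for $j\ge r+1\ge3$; but you should either restrict (iv) to $j\ge2$ or handle $j=1$ separately. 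The ``symmetric computation'' for $\eta$ in (ii) is also not literally the same chain of inequalities (dividing by $\eta(2k)<1$ reverses the direction), but the one-line fix $1-\eta(2k-2j)/\eta(2k)<1-\eta(2k-2j)<2^{1-2k+2j}$ does the job.
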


\subsection{The polynomial $Q_k$}

To study $Q_k$, let us consider the monic polynomial 
\begin{align*}
N_k(z)&=\frac{1}{2^{2k-1}\eta_0(2k-1)}Q_k(z)(z^2+1)\\
&=z^{2k+1}+z^{2k-1}+(-1)^kz^3+(-1)^kz+\frac{4(-1)^k}{\pi} \sum_{j=1}^k(q_{j-1}-q_j)(-1)^{j-1}z^{2j}.
\end{align*}
We can then write a similar decomposition 
\[N_k(z)z^{-k-1}=z^{-k}H_r(z)+z^{-k}E_r(z).\]
The main term is given by
\[H_r(z)=z^{2k-2r}h_r(z)+(-1)^kh_r^*(z),\]
where
\[h_r(z)=z^{2r}+ z^{2r-2}+ \frac{4}{\pi} \sum_{j=1}^r(\eta_0(2j-2)-\eta_0(2j))(-1)^{j-1}z^{2r-2j+1}.\]
On the other hand, the error term is given by
\[E_r(z)=z^{k}e_r(z)+(-1)^kze_r^*(z),\]
where
\[e_r(z)=\frac{4}{\pi} \sum_{j=r+1}^{\lfloor\frac{k}{2}\rfloor}(q_{j-1}-q_j)(-1)^{j-1}z^{-2j+k+1}+\frac{4}{\pi} \sum_{j=1}^{r}(\delta_{j-1}-\delta_j)(-1)^{j-1}z^{-2j+k+1}.\]
From this, for $|z|=1$,
\[|e_r(z)|\leq \tfrac{16}{3\pi}(2^{-2r}+2^{2-k})+\tfrac{2}{3\pi} \cdot 4^{r+2-k} \leq 0.14\]
for $r=2, k\geq 8$.

We need to consider \[h_2(z)=z^4-\tfrac{\pi}{2}z^3+z^2+\left(\tfrac{\pi^3}{24}-\tfrac{\pi}{2}\right)z.\]
It is not hard to verify that all the zeros have absolute value strictly less than 1, and that, for $z$ on the unit circle, we find that
\[ |h_2(z)|\ge |h_2(1)|= \tfrac{\pi^3}{24} - \pi + 2 = 0.1503\ldots .\]
Thus Lemma \ref{L-2} can be applied, using $h_2$, $e_2$ and $c=0.15$.  This finishes the proof for $Q_k(z)$.

\subsection{The polynomial $W_k$}

To study $W_k$, let us consider the monic polynomial 
\begin{align*}
V_k(z) &= \frac{\pi}{2^{2k}\eta(2k)}W_k(z)(z^2+1)\\
&=z^{2k+2}+(-1)^k+2 (-1)^k\sum_{j=1}^{k}(q_{j-1}-q_j)(-1)^{j-1}z^{2j}.
\end{align*}
We can then write
\[V_k(z)z^{-k-1}=z^{-k-1}H_r(z)+z^{-k-1}E_r(z).\]
In this case the main term is given by
\[H_r(z)=z^{2k+2-2r}h_r(z)+(-1)^kh_r^*(z),\]
where
\[h_r(z)=z^{2r}+2 \sum_{j=1}^r(\eta(2j-2)-\eta(2j))(-1)^{j-1}z^{2r-2j}.\]
The error term is given by
\[E_r(z)=z^{k+1}e_r(z)+(-1)^kz^2e_r^*(z),\]
where
\[e_r(z)=2\sum_{j=r+1}^{\lfloor\frac{k}{2}\rfloor}(q_{j-1}-q_j)(-1)^{j-1}z^{-2j+k+1}-2 \sum_{j=1}^{r}(\delta_{j-1}-\delta_j)(-1)^{j-1}z^{-2j+k+1}.\]
From this, for $|z|=1$,
\[|e_r(z)|\leq \tfrac{32}{3}(2^{-2r}+2^{-k})+\tfrac{1}{3}\cdot 4^{r+2-k} \leq 0.5\]
for $r=3, k\geq 6$.

We thus need to consider the polynomial.
\[h_3(z)=z^6+\left(1-\tfrac{\pi^2}{6}\right)z^4+\left(\tfrac{7\pi^4}{360}-\tfrac{\pi^2}{6}\right)z^2+\left(\tfrac{7\pi^4}{360}-\tfrac{31\pi^6}{15120}\right), \]
 which has all its zeros in $|z|<1$. Furthermore, for $z$ on the unit circle, we find that

\[ |h_3(z)|\geq |h_3(1)|=|h_3(-1)|=-\tfrac{31\pi^6}{15120} + \tfrac{7\pi^4}{180} - \tfrac{\pi^2}{3} + 2 = 0.5271\ldots \]

 So, again, Lemma \ref{L-2} can be applied,  using $h_3$, $e_3$ and $c=0.52$.  This concludes the proof for $W_k(z)$.

\subsection*{Acknowledgement}
We thank Mathew Rogers for bringing the problem of the zeros of $P_k$ to our attention and for very helpful discussions.


\begin{thebibliography}{9}


\normalsize
\baselineskip=17pt



\bibitem{A} G. Ancochea,   Zeros of self-inversive polynomials, \emph{Proc. Amer. Math. Soc.}, {\bf 4}  (1953), 900--902. 
 
\bibitem{Be2} B.\,C.~Berndt, \emph{Ramanujan's Notebooks, Part II}, Springer-Verlag, (New York, 1985).

 
\bibitem{BM1} F. F. Bonsall and M. Marden, Zeros of self-inversive polynomials, \emph{Proc. Amer. Math. Soc.}, {\bf 3} (1952), 
471--475. 

 \bibitem{BM2} F. F. Bonsall and M. Marden, Critical points of rational functions with self-inversive polynomial factors, \emph{Proc. Amer. Math. Soc.}, {\bf 5} (1954). 111--114. 
 
 \bibitem{C}
A. Cohn, \"Uber die Anzahl der Wurzeln einer algebraischen Gleichung in einem Kreise, \emph{Math. Zeit.}, {\bf 14} (1922), 110--148. 


\bibitem{GMR} S. Gun, M. R. Murty and P. Rath, Transcendental values of certain Eichler integrals, to appear in \emph{Bull London Math. Soc.} 


\bibitem{L}
P. Lakatos, On zeros of reciprocal polynomials, \emph{C. R.
Math. Rep. Acad. Sci. Canada}, {\bf 24} (2002), 91--96.



\bibitem{LL1}
P. Lakatos and L. Losonczi,
  Self-inversive polynomials whose zeros are on the unit circle, \emph{Publ. Math. Debrecen}, {\bf 65} (2004), 409--420.


\bibitem{LL2}
P. Lakatos and L. Losonczi,
Circular interlacing with reciprocal polynomials, \emph{Math. Inequal. Appl.}, {\bf 10} (2007), 761--769. 

\bibitem{LL3}
P. Lakatos and L. Losonczi,
 Polynomials with all zeros on the unit circle, \emph{Acta Math. Hungar.}, {\bf 125} (2009), 341--356.

\bibitem{LR}
M. Lal\'in and M. Rogers, Variations of the Ramanujan polynomials and remarks on $\zeta(2j+1)/\pi^{2j+1}$, to appear in \emph{Funct. Approx. Comment.}

\bibitem{Ma} M. Marden, \emph{ Geometry of polynomials.} Second edition. Mathematical Surveys, No. 3 American Mathematical Society, Providence, R.I. 1966.
 

\bibitem{MSW}
M. Murty, C. Smyth and R. Wang,
Zeros of Ramanujan polynomials, \emph{J. of the Ramanujan Math. Soc.}, {\bf 26} (2011),
107--125.


 \bibitem{OR} P. J. O'Hara and  R. S. Rodriguez,  Some properties of self-inversive polynomials, \emph{Proc. Amer. Math. Soc.}, {\bf 44} (1974), 331--335. 


 \bibitem{RS} Q. I. Rahman and  G. Schmeisser, \emph{Analytic theory of polynomials}, London Mathematical Society Monographs. New Series, 26. The Clarendon Press, Oxford University Press, Oxford, 2002.
 
 \bibitem{Sm} C. J. Smyth, \emph{Topics in the theory of numbers}, Ph. D. Thesis, University of Cambridge, 1972.

\bibitem{Sc1} A. Schinzel, \emph{Polynomials with special regard to reducibility},  Encyclopedia of Mathematics and its Applications, 77. Cambridge University Press, Cambridge, 2000. 

\bibitem{Sc2}
A. Schinzel, Self-inversive polynomials with all zeros on
the unit circle, \emph{Ramanujan J.}, {\bf 9} (2005), 19--23.

\end{thebibliography}
\end{document}